\let\Ginclude@graphics\@org@Ginclude@graphics 
\newcommand{\I}{\mathbf{I}}
\newcommand{\0}{\mathbf{0}}
\title{Non-conservative Design of Robust Tracking Controllers Based on Input-output Data}
\author{%
	\Name{Liang Xu} \Email{liang.xu@epfl.ch}\\
	\Name{Mustafa Sahin Turan} \Email{mustafa.turan@epfl.ch}\\
	\Name{Baiwei Guo} \Email{baiwei.guo@epfl.ch}\\
	\Name{Giancarlo Ferrari-Trecate} \Email{giancarlo.ferraritrecate@epfl.ch}\\
	\addr Institute of Mechanical Engineering (IGM), \'Ecole Polytechnique F\'ed\'erale de Lausanne (EPFL), Switzerland
}
\begin{document}
	
	\maketitle

	\begin{abstract}
		This paper studies worst-case robust optimal tracking using noisy input-output data.
  We utilize  behavioral system theory to represent system trajectories, while avoiding explicit system identification.
  We assume that the recent output data used to implicitly specify the initial condition are noisy and we provide a non-conservative design procedure for robust control based on optimization  with a linear cost and linear matrix inequality (LMI) constraints.
  Our methods rely on the parameterization of noise sequences compatible with the data-dependent system representation and on a suitable reformulation of the performance specification, which further enable the application of the S-lemma to derive an LMI optimization problem.
  The performance of the new controller is discussed through simulations.
\end{abstract}

	\begin{keywords}
		data-driven control, robust control, reference tracking, linear matrix inequalities.
	\end{keywords}
	
	\section{Introduction}\label{sec.Introduction}
        Due to the recent advances in pervasive sensing, communication and computation, data availability for control design is steadily increasing.
        This has motivated a renewed interest in the development of frameworks for data-driven control with performance guarantees using finite-length data sequences~\citep{DePersis2020TAC, vanWaardeHenkJ2020TAC, MatniNikolai2019CDCLearningControlSurvey, StephenTu2019Thesis}.
Several recent works use raw data for representing the system dynamics, as well as conducting system analysis and control design~\citep{DePersis2020TAC, vanWaardeHenkJ2020TAC,  BerberichJulian2020RobustDataDriven, BisoffiAndrea2020RobustInvariance, vanWaardeHenk2020MatrixSLemma, DePersis2020LowComplexity, CoulsonJeremy2019ECCDeePC, BerberichJulian2020DeeMPCStability, CoulsonJeremy2020DistributionallyRobustDeePC}.
However, most of these approaches are conceived for noiseless data or noisy input-state data.
Noisy input-output measurements are considered in~\citep{BerberichJulian2020DeeMPCStability, CoulsonJeremy2020DistributionallyRobustDeePC, RN11435}.
In~\cite{BerberichJulian2020DeeMPCStability}, slack variables are introduced in the data-dependent system representation to account for  noisy measurements.
The modified control scheme is shown to be recursively feasible and practically exponentially stable; however, the tracking performance is not analyzed.
The authors in~\cite{CoulsonJeremy2020DistributionallyRobustDeePC} propose a distributionally robust variant of DeePC based on semi-infinite optimization.
They then formulate a finite and convex program, whose optimal value is an upper bound to that of the original optimization problem.
The work~\cite{RN11435} considers using noiseless historical data and noisy recent output data to minimize the energy of the control input while robustly satisfying input/output constraints.
The authors propose to separate the problems of estimation of the initial condition and control design, and show that the solution to the formulated problem is computed by consecutively solving two optimization problems.

In safety-critical applications, such as power networks and industrial control systems, it is sometimes required to adopt a bounded-error perspective by enforcing robustness against all possible noise realizations and providing worst-case performance guarantees.
This is the setting considered in the present paper and, for this purpose, we utilize the data-driven prediction method in~\cite{MarkovskyIvan2008DataDrivSimCont}.
We assume the historical data are noiseless while recent data are corrupted by noise terms satisfying a quadratic constraint similar to the one in~\cite{vanWaardeHenk2020MatrixSLemma}.
Our goal is to provide a control design method for worst-case optimal reference tracking with explicit performance guarantees.

We first characterize noises that are consistent with the input-output data, and then reformulate the tracking cost.
This enables us to apply the S-lemma~\citep{polik2007survey} to transform the worst-case robust control problem  to an equivalent minimization problem with a linear cost and linear matrix inequality (LMI) constraints.
In contrast to~\cite{RN11435}, we aim to minimize a quadratic cost on both inputs and outputs, while the method in~\cite{RN11435} only deals with the minimization of the input energy.
The main features of our method are the following: (1) we consider the minimization of the worst-case tracking performance; (2) the proposed method does not require system identification; (3) the proposed design procedure is non-conservative, meaning that we obtain the optimal tracking controllers without any approximations.

This paper is organized as follows. In Section~\ref{sec.PreliminariesDataDriven}, we provide preliminaries on data-driven simulation and control.
The problem formulation is given in Section~\ref{sec.ProblemFormulation}.
The data-based robust optimal tracking control problem is solved in Section~\ref{sec.RobustControlDesign}.
Simulations are provided in Section~\ref{subsec.Simulation}.
Some concluding remarks are provided in Section~\ref{sec.Conclusion}.

\textbf{Notation}:
For a square matrix $\Phi$, $\Phi>0$ ($\Phi \ge 0$) represents that it is positive definite (semidefinite).
For $Q\ge 0$, the norm $\|x\|_Q$ is defined as $\sqrt{x^\top Qx}$.
For a matrix $A \in \mathbb{R}^{n\times m}$, $\ker(A)$ and $\mathrm{range}(A)$ denote its null space and column space, respectively. 
Moreover, $\mathcal{N}(A)\in \mathbb{R}^{m\times \dim(\ker(A))}$ denotes a matrix whose columns form a basis for the null space of $A$. 
$\I$ and $\0$ denote identity and zero matrices of suitable size.
The operator $\otimes$ denotes the Kronecker product.

	\section{Preliminaries on Data-driven Prediction}\label{sec.PreliminariesDataDriven}
	
	We consider a controllable discrete-time LTI system $\mathcal{G}$ with the state space model 
	\begin{equation}\label{eq.LTI_system_ss} 
	\begin{aligned}
	x_{k+1} =A x_{k}+B u_{k}, \quad	y_{k} =C x_{k}+D u_{k},
	\end{aligned}
	\end{equation}
	where $x_k, x_0\in \mathbb{R}^n, u_k\in \mathbb{R}^m, y_k\in \mathbb{R}^p$ are the system states, initial state, inputs, and outputs, respectively. Here we assume that $(A,B,C,D)$ is in minimal form; thus, the pair $(A,B)$ is controllable and $(A,C)$ is observable. The lag $\mathbf{l}(\mathcal{G})$ is defined as the smallest integer $l$ such that the $l$-step observability matrix $[C^\top,A^\top C^\top,\dots,(A^{l-1})^\top C^\top]^\top$ has rank $n$. Therefore, $\mathbf{l}(\mathcal{G})\leq n$. A system can only generate certain input-output trajectories.
		\begin{definition}
		An input-output sequence ${\{u_{k}, y_{k}\}}_{k=0}^{T-1}$ is a trajectory of $\mathcal{G}$ if and only if there exists an initial condition ${x_0} \in \mathbb{R}^{n}$ as well as a state sequence ${\{x_{k}\}}_{k=1}^{T}$ such that~\eqref{eq.LTI_system_ss} holds
		for $k=0, \ldots, T-1$.
	\end{definition}

	As common in data-driven control, to determine system characteristics, one needs to collect a set of input-output data ${\{\bar{u}_{k}, \bar{y}_{k}\}}_{k=t_h}^{t_h+T_d-1}$, which we call \textit{historical}. Historical data can be thought as collected long before the start (indicated by time $0$) of any control or prediction, i.e., $t_h\ll 0$. In the remainder of this section, we will show how to use the historical data to form a data-dependent representation of the system~\citep{WillemsJan2005SCL, MarkovskyIvan2008DataDrivSimCont}.
	Throughout the paper, the column concatenation of the vectors in a sequence ${\{v_k\}}_{k=i}^j$ is abbreviated as $v$, where the starting and ending indices $i, j$ are clear from the context.
	The Hankel matrix of \textit{depth} $L$ associated with a historical sequence ${\{\bar{v}_k\}}_{k=t_h}^{t_h+T_d-1}$ is defined as
	\begin{align*}
		\mathcal{H}_{L}(\bar{v}):=\left[\begin{array}{cccc}\bar{v}_{t_h} & \bar{v}_{t_h+1} & \cdots & \bar{v}_{t_h+T_d-L} \\ \bar{v}_{t_h+1} & \bar{v}_{t_h+2} & \cdots & \bar{v}_{t_h+T_d-L+1} \\ \vdots & \vdots & \ddots & \vdots \\ \bar{v}_{t_h+L-1} & \bar{v}_{t_h+L} & \cdots & \bar{v}_{t_h+T_d-1}\end{array}\right].
	\end{align*}
	
    To assess if $(\mathcal{H}_{L}(\bar{u})$, $\mathcal{H}_{L}(\bar{y}))$ is informative for predicting system trajectories, we introduce the concept of persistent excitation.
	\begin{definition}
		An input sequence ${\{\bar{u}_{k}\}}_{k=t_h}^{t_h+T_d-1}$ is persistently exciting of order $L$ if the corresponding Hankel matrix is full row rank, i.e., rank$\left(\mathcal{H}_{L}(\bar{u})\right)=m L$.
	\end{definition}
	The Fundamental Lemma shows how to directly use the known input-output data to characterize all possible system trajectories.
	\begin{lemma}[Fundamental Lemma~\citep{WillemsJan2005SCL}]\label{lem.WillemFundamentalLemma}
		Suppose ${\{\bar{u}_{k}, \bar{y}_{k}\}}_{k=t_h}^{t_h+T_d-1}$ is a trajectory of an LTI system $\mathcal{G}$ and that the input sequence $\{\bar{u}_{k}\}_{k=t_h}^{t_h+T_d-1}$ is persistently exciting of order $L+n$.
		Then, ${\{u_{k}, y_{k}\}}_{k=0}^{L-1}$ is a trajectory of $\mathcal{G}$ if and only if there exists $g \in \mathbb{R}^{T_d-L+1}$ such that
		\begin{align}\label{eq.WillemsLemma}
			\left[\begin{array}{c}
				\mathcal{H}_{L}(\bar{u}) \\
				\mathcal{H}_{L}(\bar{y})
			\end{array}\right]g =\left[\begin{array}{l}
				u \\
				y
			\end{array}\right].
		\end{align}
	\end{lemma}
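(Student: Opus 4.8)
The plan is to prove the equivalence through a subspace-dimension argument. Let $\mathcal{B}_L \subseteq \mathbb{R}^{(m+p)L}$ denote the set of all length-$L$ trajectories of $\mathcal{G}$. By the state-space model~\eqref{eq.LTI_system_ss}, a trajectory is determined by the pair $(x_0, {\{u_k\}}_{k=0}^{L-1})$, and by minimality (observability together with the natural condition $L \geq \mathbf{l}(\mathcal{G})$ that holds in our setting) this parameterization is injective; hence $\mathcal{B}_L$ is a linear subspace of dimension $mL + n$. Since the historical sequence is itself a trajectory, shift-invariance implies that every length-$L$ window of it, i.e.\ every column of the stacked matrix $H := [\mathcal{H}_L(\bar u)^\top, \mathcal{H}_L(\bar y)^\top]^\top$, belongs to $\mathcal{B}_L$. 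Consequently $\mathrm{range}(H) \subseteq \mathcal{B}_L$, and the ``if'' direction is immediate: any $[u^\top, y^\top]^\top = H g$ is a linear combination of trajectories, hence a trajectory because $\mathcal{B}_L$ is a subspace.

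For the ``only if'' direction it suffices to upgrade the inclusion to $\mathrm{range}(H) = \mathcal{B}_L$; since $\mathrm{range}(H) \subseteq \mathcal{B}_L$, this reduces to showing $\mathrm{rank}(H) = \dim \mathcal{B}_L = mL + n$. First I would expose the state structure via the standard factorization
\begin{align*}
\begin{bmatrix}\mathcal{H}_L(\bar u) \\ \mathcal{H}_L(\bar y)\end{bmatrix}
=
\begin{bmatrix}\I & \0 \\ \Gamma_L & \mathcal{O}_L\end{bmatrix}
\begin{bmatrix}\mathcal{H}_L(\bar u) \\ X\end{bmatrix},
\end{align*}
where $\mathcal{O}_L = [C^\top, (CA)^\top, \dots, (CA^{L-1})^\top]^\top$ is the extended observability matrix, $\Gamma_L$ is the block-Toeplitz matrix of Markov parameters, and $X = [\bar x_{t_h}, \dots, \bar x_{t_h + T_d - L}]$ collects the window initial states. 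Because $L \geq \mathbf{l}(\mathcal{G})$, $\mathcal{O}_L$ has full column rank $n$, so the left factor has full column rank $mL + n$; thus $\mathrm{rank}(H) = \mathrm{rank}[\mathcal{H}_L(\bar u)^\top, X^\top]^\top$, and everything reduces to proving that this input/state Hankel matrix has full row rank $mL + n$.

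This rank equality is the crux and the main obstacle. The plan is a proof by contradiction: suppose $\eta^\top \mathcal{H}_L(\bar u) + \xi^\top X = 0$ for some $(\eta, \xi) \neq 0$. If $\xi = 0$, then $\eta^\top \mathcal{H}_L(\bar u) = 0$ with $\eta \neq 0$, contradicting the full row rank of $\mathcal{H}_L(\bar u)$, which follows from persistency of excitation of order $L+n \geq L$. The hard case is $\xi \neq 0$. Here I would apply the characteristic polynomial $p(z) = \det(z\I - A)$ as a shift operator $p(\sigma)$ to the scalar sequence $j \mapsto \xi^\top \bar x_{t_h+j}$: by Cayley--Hamilton the autonomous part cancels ($p(A) = 0$), so $p(\sigma)(\xi^\top \bar x_{t_h+j})$ collapses to a moving average of $\bar u$ over a window of length $n$. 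Applying the same operator to the identity $\eta^\top \mathcal{H}_L(\bar u) = -\xi^\top X$ then yields a single linear functional $\mu^\top \mathcal{H}_{L+n}(\bar u) = 0$ acting on the depth-$(L+n)$ Hankel matrix.

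The delicate point, and the step I expect to be the real obstacle, is to show that the resulting coefficient vector $\mu$ is nonzero whenever $(\eta, \xi) \neq 0$. This is exactly where controllability of $(A,B)$ is indispensable: it guarantees that the moving-average coefficients contributed by $\xi^\top \bar x$ through $B, AB, \dots, A^{n-1}B$ cannot all vanish nor be annihilated against the $\eta$-part, so $\mu \neq 0$. Granting $\mu \neq 0$, the relation $\mu^\top \mathcal{H}_{L+n}(\bar u) = 0$ contradicts persistency of excitation of order $L+n$, which asserts that $\mathcal{H}_{L+n}(\bar u)$ has full row rank $m(L+n)$. This contradiction establishes $\mathrm{rank}[\mathcal{H}_L(\bar u)^\top, X^\top]^\top = mL + n$, completing the reduction and hence the lemma. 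The only nontrivial computation is the bookkeeping of $p(\sigma)$ together with the controllability argument for $\mu \neq 0$; everything else is linear algebra.
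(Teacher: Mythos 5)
The paper never proves Lemma~\ref{lem.WillemFundamentalLemma}: it is imported verbatim, with citation, from \citep{WillemsJan2005SCL}, so there is no in-paper argument to compare yours against. Judged on its own, your plan is a correct reconstruction of the standard state-space proof of the Fundamental Lemma: the inclusion $\mathrm{range}(H)\subseteq\mathcal{B}_L$ by linearity and time-invariance, the factorization of $H$ through the block matrix built from the extended observability matrix $\mathcal{O}_L$ and the Toeplitz matrix of Markov parameters, and the reduction of everything to the full row rank of the input/state matrix $[\mathcal{H}_L(\bar u)^\top, X^\top]^\top$, proved by a Cayley--Hamilton argument against persistency of excitation of order $L+n$ --- these are exactly the ingredients of the known argument, and every step you assert is true.

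Two points need tightening before the proof is complete. First, the lemma as stated carries no hypothesis $L\ge \mathbf{l}(\mathcal{G})$, while your dimension count $\dim\mathcal{B}_L = mL+n$ uses it; for $L<\mathbf{l}(\mathcal{G})$ one has $\dim\mathcal{B}_L = mL+\mathrm{rank}(\mathcal{O}_L)<mL+n$. The repair is cheap: your crux result, full row rank of $[\mathcal{H}_L(\bar u)^\top, X^\top]^\top$, nowhere needs $L\ge\mathbf{l}(\mathcal{G})$, and once it holds, $\mathrm{rank}(H)$ equals the rank of the left factor, which is $mL+\mathrm{rank}(\mathcal{O}_L)=\dim\mathcal{B}_L$; so $\mathrm{range}(H)=\mathcal{B}_L$ for every $L$. (In this paper the lemma is only invoked with $L\ge T_{\mathrm{ini}}\ge\mathbf{l}(\mathcal{G})$, so your restriction is harmless here, but it is not part of the statement.) Second, the step you flag as delicate, $\mu\neq 0$, should be spelled out, and when you do so the structure is cleaner than your phrase ``annihilated against the $\eta$-part'' suggests: the state contribution $\xi^\top(\cdot)B$ enters only the coefficients at lags $0,\dots,n-1$, so the coefficients at lags $n,\dots,L+n-1$ are purely the convolution of the monic polynomial $p$ with $\eta$; a downward induction from the top lag (whose coefficient is exactly $\eta_{L-1}$) forces $\eta=0$. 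With $\eta=0$, the low-lag coefficients become $\xi^\top B$, $\xi^\top(A+p_{n-1}\I)B$, and so on, yielding successively $\xi^\top A^k B=0$ for $k=0,\dots,n-1$, whence controllability gives $\xi=0$. Thus monicity kills $\eta$ first and controllability kills $\xi$ second; no cross-cancellation can occur, and with these two repairs your argument is a complete proof.
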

An LTI system $\mathcal{G}$ has infinitely many trajectories corresponding to different initial states~$x_0$; therefore, Lemma~\ref{lem.WillemFundamentalLemma} cannot be directly used to predict the system output $\{y_k\}^{T_e-1}_{k=0}$ from the input sequence $\{u_k\}^{T_e-1}_{k=0}$. In order to determine the initial state and, therefore,
$\{y_k\}^{T_e-1}_{k=0}$, one also needs to know an initial trajectory $\{u_k,y_k\}^{-1}_{k=-T_{\mathrm{ini}}}$. The column concatenations of these initial sequences are denoted as $u_{\mathrm{ini}}$ and $y_{\mathrm{ini}}$, respectively. $\{u_{\mathrm{ini}}, y_{\mathrm{ini}}\}$ are measured later than the historical data $\{\bar{u}, \bar{y}\}$; therefore, we refer to the former as \textit{recent} data.The length $T_{\mathrm{ini}}$ should be no less than $\mathbf{l}(\mathcal{G})$ for $x_0$ and thus $y$ to be uniquely determined~\citep{MarkovskyIvan2008DataDrivSimCont}. When applying Lemma~\ref{lem.WillemFundamentalLemma} to characterize the system trajectory from $k=-T_{\text{ini}}$ to $k = T_e-1$, we need Hankel matrices to be of proper sizes, i.e. 
	\begin{equation*}
		U = \begin{bmatrix}
			U_p^\top &
			U_f^\top
		\end{bmatrix}^\top \triangleq \mathcal{H}_{T_{\mathrm{ini}}+T_e}\left(\bar{u}\right), \quad \quad Y=\begin{bmatrix}
			Y_p^\top &
			Y_f^\top
		\end{bmatrix}^\top \triangleq \mathcal{H}_{T_{\mathrm{ini}}+T_e}\left(\bar{y}\right),
	\end{equation*}
	where $U_p$ and $Y_p$ consist of the first $T_\mathrm{ini}$ block rows of $U$ and $Y$, while $U_f$ and $Y_f$ consist of the last $T_e$ block rows of the $U$ and $Y$. The following lemma shows how to predict the system outputs based on the Fundamental Lemma.
	\begin{lemma}[\cite{MarkovskyIvan2008DataDrivSimCont}]\label{lemma.UniqueOutput}
		Suppose $\bar{u}$ is persistently exciting of order $T_{\mathrm {ini}}+T_e+n$, and  $T_{\mathrm {ini}} \geq \mathbf{l}(\mathcal{G})$.
		Then for a system trajectory $(u_\mathrm{ini},y_\mathrm{ini})$ and any $T_e$-long input sequence $u$, the following equation
		\begin{align} \label{eq.DataDriveSimControl}
			\left[\begin{array}{l}
				U_p \\
				Y_p \\
				U_f \\
				Y_f
			\end{array}\right] g=\left[\begin{array}{c}
				u_{\mathrm{ini}} \\
				y_{\mathrm {ini}} \\
				u\\
				y
			\end{array}\right]
		\end{align}
		can be solved for $g$ and $y$, where the solution $y$ is unique.
	\end{lemma}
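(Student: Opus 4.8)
The plan is to prove existence and uniqueness separately, in both cases using the Fundamental Lemma (Lemma~\ref{lem.WillemFundamentalLemma}) to translate between the data-dependent equation~\eqref{eq.DataDriveSimControl} and genuine system trajectories, and using the observability hypothesis $T_{\mathrm{ini}}\ge\mathbf{l}(\mathcal{G})$ to pin down the initial state.

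For existence, I would start from the fact that $(u_\mathrm{ini},y_\mathrm{ini})$ is itself a trajectory, so there is an initial state $x_{-T_\mathrm{ini}}$ generating it through~\eqref{eq.LTI_system_ss}. Propagating the recursion forward with the prescribed future input $u$ yields a state sequence and an associated output $y$, so that the concatenation $(\mathrm{col}(u_\mathrm{ini},u),\mathrm{col}(y_\mathrm{ini},y))$ is a trajectory of length $L:=T_\mathrm{ini}+T_e$. Since $\bar u$ is persistently exciting of order $L+n$, the Fundamental Lemma guarantees a $g$ satisfying~\eqref{eq.WillemsLemma}, which is precisely~\eqref{eq.DataDriveSimControl}; this produces an admissible pair $(g,y)$.

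For uniqueness of $y$, suppose $(g_1,y_1)$ and $(g_2,y_2)$ both solve~\eqref{eq.DataDriveSimControl} with the same data $(u_\mathrm{ini},y_\mathrm{ini},u)$. By the Fundamental Lemma, each concatenation $(\mathrm{col}(u_\mathrm{ini},u),\mathrm{col}(y_\mathrm{ini},y_i))$ is a valid trajectory, sharing the same past $(u_\mathrm{ini},y_\mathrm{ini})$ and the same future input $u$. The decisive step is to show the two trajectories share the state $x_0$ at time $0$: writing the past outputs as $y_\mathrm{ini}=\mathcal{O}_{T_\mathrm{ini}}x_{-T_\mathrm{ini}}+\mathcal{T}\,u_\mathrm{ini}$, where $\mathcal{O}_{T_\mathrm{ini}}=[C^\top,(CA)^\top,\dots,(CA^{T_\mathrm{ini}-1})^\top]^\top$ is the observability matrix and $\mathcal{T}$ the corresponding Toeplitz operator built from the impulse response, the hypothesis $T_\mathrm{ini}\ge\mathbf{l}(\mathcal{G})$ makes $\mathcal{O}_{T_\mathrm{ini}}$ of full column rank $n$. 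Hence $x_{-T_\mathrm{ini}}$, and therefore the propagated state $x_0$, is uniquely determined by $(u_\mathrm{ini},y_\mathrm{ini})$ and is identical for the two trajectories. Once $x_0$ and $u$ agree, the deterministic recursion~\eqref{eq.LTI_system_ss} forces the same future output, so $y_1=y_2$.

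The main obstacle I anticipate is precisely this uniqueness-of-state argument: one must argue that the injectivity of the map from initial states to output sequences --- guaranteed exactly by $T_\mathrm{ini}\ge\mathbf{l}(\mathcal{G})$ --- survives the existential characterization of the Fundamental Lemma. That is, although $g$ itself is generally non-unique (the stacked Hankel matrix has a nontrivial kernel whenever $T_d-L+1$ exceeds the dimension of the trajectory space), every admissible $g$ must still return the same future output block $Y_f g=y$. The existence half is essentially bookkeeping once the forward-propagated trajectory is constructed; the content lies in coupling observability with determinism to collapse the non-uniqueness of $g$ onto a unique $y$.
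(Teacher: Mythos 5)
Your proof is correct: existence follows by applying the Fundamental Lemma to the length-$(T_\mathrm{ini}+T_e)$ trajectory obtained by forward-propagating an initial state consistent with $(u_\mathrm{ini},y_\mathrm{ini})$ under the input $u$, and uniqueness follows because $T_\mathrm{ini}\ge\mathbf{l}(\mathcal{G})$ makes the $T_\mathrm{ini}$-step observability matrix full column rank, so every admissible $g$ corresponds to the same state at time $0$ and hence, by determinism of~\eqref{eq.LTI_system_ss}, the same output block $Y_f g$. The paper itself offers no proof of this lemma --- it is quoted with a citation to \cite{MarkovskyIvan2008DataDrivSimCont} --- and your argument is essentially the standard one given in that reference, so the two approaches coincide.
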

	
	\section{Problem Formulation}\label{sec.ProblemFormulation}
	
	Based on the above method of system simulation, with the \textit{historical} data $\{\bar{u},\bar{y}\}$ and \textit{recent} data $\{u_\mathrm{ini}, y_\mathrm{ini}\}$ at hand, we formulate the problem of data-driven linear-quadratic tracking over a finite horizon as
	\begin{align}
		\min_{u, g}\sum_{k=0}^{T_e-1}\left(\left\|y_{k}-r_{k}\right\|_{Q}^{2}+\left\|u_{k}\right\|_{R}^{2}\right) \label{eq.yini-performanceconstraint-noiseless} \quad 
		\mathrm{s.t.}~\eqref{eq.DataDriveSimControl},
	\end{align}
	where $r$ represents the output reference to be tracked; $Q$ and $R$ are positive semi-definite matrices; $u$ is the control input to be designed; $y$ is the resulting output from $u$ and also the unique solution to~\eqref{eq.DataDriveSimControl} in view of Lemma~\ref{lemma.UniqueOutput}.
	
	In this paper, we are interested in the case that initial output trajectory $y_{\mathrm{ini}}$ is noisy, i.e.,
	$y_{\mathrm{ini}}=\check{y}_{\mathrm{ini}}+w$,
	where $\check{y}_{\mathrm{ini}}$ represents the noiseless output signal and $w$ represents the measurement noise.
	Moreover, we assume that $w$ satisfies the quadratic constraint, first introduced in~\cite{vanWaardeHenk2020MatrixSLemma} and~\cite{BerberichJulian2020RobustDataDriven}
	\begin{align}
		\begin{bmatrix}
			1\\
			w
		\end{bmatrix}^\top
		\underbrace{\begin{bmatrix}
				\Phi_{11} & \Phi_{12}\\
				\Phi_{12}^\top & \Phi_{22}
		\end{bmatrix}}_{\Phi}
		\begin{bmatrix}
			1\\
			w
		\end{bmatrix}
		\ge 0, ~\text{ where } \Phi_{22}=\Phi_{22}^\top< 0. \label{eq.yini-noiseconstraint}
	\end{align}
	
	\begin{remark}\label{rem.wbounded}
		As shown in~\cite{vanWaardeHenk2020MatrixSLemma} and~\cite{BerberichJulian2020RobustDataDriven}, the negative definiteness of $\Phi_{22}$ ensures that noise $w$ is bounded.
		In the special case that $\Phi_{12}=\0$ and $\Phi_{22}=-\I$,~\eqref{eq.yini-noiseconstraint} reduces to
		$w^\top w=\sum_{i}w_i^\top w_i\le \Phi_{11},$
		which has the interpretation of bounded accumulated energy for $w$.
	\end{remark}
	
	\begin{remark}
		We assume that the historical data $\{\bar{u}_k,\bar{y}_k\}^{t_h+T_d-1}_{k=t_h}$ is not affected by noise, but only the recent output measurements $y_\mathrm{ini}$ are. This assumption is realistic as in certain practical scenarios one might have access to very accurate (and, thus, expensive) sensors to collect historical data once, but only have relatively inaccurate and noisy sensors to collect data during real-time operations.
	\end{remark}

	We are interested in designing the control input $u$ that minimizes the worst-case quadratic tracking error~\eqref{eq.yini-performanceconstraint-noiseless} among all \textit{feasible} noise trajectories $w$, i.e., the vectors $w$ satisfying~\eqref{eq.yini-noiseconstraint}, such that $(u_{\mathrm{ini}}, y_{\mathrm{ini}}-w)$ is a trajectory of $\mathcal{G}$, as per Lemma~\ref{lem.WillemFundamentalLemma}. The formal min-max robust optimal tracking control problem is given as follows.
	
	\noindent \textbf{Problem~P1}: Find the input sequence $u$ solving the min-max optimization problem
	\begin{align}
		\min_u \max_{w,g}& \sum_{k=0}^{T_e-1}\left(\left\|y_{k}-r_{k}\right\|_{Q}^{2}+\left\|u_{k}\right\|_{R}^{2}\right) \nonumber \\
		\text{s.t.,} &
		\begin{bmatrix}
			U_p \\
			Y_p \\
			U_f \\
			Y_f
		\end{bmatrix} g=\begin{bmatrix}
			u_{\mathrm{ini}} \\
			y_{\mathrm{ini}} \\
			u \\
			y
		\end{bmatrix}-\begin{bmatrix}
			\0 \\
			w \\
			\0 \\
			\0
		\end{bmatrix}, \label{eq.yini-model-minmax}\\
		&\begin{bmatrix}
			1\\
			w
		\end{bmatrix}^\top\Phi
		\begin{bmatrix}
			1\\
			w
		\end{bmatrix}
		\ge 0 \label{eq.yini-noiseconstraint-minmax}.
	\end{align}
	
	\begin{remark}
		In view of Lemma~\ref{lemma.UniqueOutput}, there is a unique output $y$ for given $u$ and $w$. Therefore, even though multiple $g$ verifying~\eqref{eq.yini-model-minmax} might exist, they are completely equivalent, since they yield the same input-output trajectory. As such, in the optimization problem \textbf{P1}, the optimization variable $g$ can be omitted for notational simplicity. 
              \end{remark}
              
	In view of Lemma~\ref{lem.WillemFundamentalLemma} and above min-max optimization problem, the \textit{feasible} noises are $w$ vectors satisfying~\eqref{eq.yini-noiseconstraint-minmax}, such that~\eqref{eq.yini-model-minmax} admits a solution $g$. In the next section, we propose a method to solve \textbf{P1}. 
	
	\section{Robust Controller Design}\label{sec.RobustControlDesign}
	Problem~\textbf{P1} can be reformulated as
	\begin{equation}\label{eq.minimization_problem}
	\begin{gathered}
	\min_{u, \gamma} \quad \gamma \\
	\text{s.t.}, \quad \text{LQTE}(u,w)\leq \gamma ~~ \forall w
	\text{ satisfying }
	\eqref{eq.yini-model-minmax},\eqref{eq.yini-noiseconstraint-minmax},
	\end{gathered}
	\end{equation}
	where the linear quadratic tracking error is defined as
	$\text{LQTE}(u,w)\triangleq \sum_{k=0}^{T_e-1}\left(\left\|y_{k}-r_{k}\right\|_{Q}^{2}+\left\|u_{k}\right\|_{R}^{2}\right)$.
	For notational simplicity, we have omitted the dependence of $\text{LQTE}$ on $r$.
	
	In the sequel, we will derive a tractable reformulation of~\eqref{eq.minimization_problem}.
	We first show in subsection~\ref{subsec.NoiseConstTransform} that any noise $w$ satisfying~\eqref{eq.yini-model-minmax} and~\eqref{eq.yini-noiseconstraint-minmax} can be parameterized by a vector $g_w$ satisfying a quadratic constraint.
	In subsection~\ref{subsec.PerformanceTransformation}, we show that the output $y$ is completely determined by the input $u$ and the vector $g_w$, which further allows us to express the constraint $\text{LQTE}(u, w)\le \gamma$ in~\eqref{eq.minimization_problem} as a quadratic constraint on $g_w$.
	In light of these results, in subsection~\ref{subsec.MainResult}, we show that~\eqref{eq.minimization_problem} is equivalent to a minimization problem with a linear cost and LMI constraints. 

	\subsection{Feasible Noise Parameterization}\label{subsec.NoiseConstTransform}
Since $\bar{u}=\{\bar{u}_k\}_{k=t_h}^{t_h+T_d-1}$ is persistently exciting of order $T_{\mathrm{ini}}+T_e+n$, we know that $\{\bar{u}_k\}_{k=t_h}^{t_h+T_d-T_e-1}$ is persistently exciting of order $T_{\mathrm{ini}}+n$. In view of Lemma~\ref{lem.WillemFundamentalLemma},  $(u_\mathrm{ini},y_\mathrm{ini}-w)$ is a trajectory of~$\mathcal{G}$ if and only if there exists a vector $g_\mathrm{ini}$, such that
	\begin{align}
		\begin{bmatrix}
			U_p\\
			Y_p
		\end{bmatrix}g_\mathrm{ini}=
		\begin{bmatrix}
			u_{\mathrm{ini}}\\
			y_{\mathrm{ini}}-w
		\end{bmatrix}. \label{eq.yini-trajectoryconstraint}
	\end{align}
        Therefore, a noise vector $w$ verifies~\eqref{eq.yini-model-minmax} if and only if it belongs to the set
	\begin{equation}
	\mathcal{W}\triangleq \left\{w\in\mathbb{R}^{pT_\mathrm{ini}}\left|\begin{bmatrix}
	u_\mathrm{ini} \\
	y_\mathrm{ini}-w
	\end{bmatrix}\in\mathrm{range}\left(\begin{bmatrix}
	U_p \\
	Y_p
	\end{bmatrix}\right)\right.\right\}.
	\end{equation}
	
	In the following lemma, we show that the set $\mathcal{W}$ can be parameterized by a vector $g_w$, and the proof can be found in~\cite{RN11538}.
	\begin{lemma}\label{lem.g_w_parameterization}
		Let $n_w = \dim(\ker(U_p))$. Then, any noise $w\in\mathcal{W}$ can be expressed as an affine function of a free vector $g_w\in\mathbb{R}^{n_w}$ as
		\begin{equation}\label{eq.w_parameterization}
		w=-Y_pMg_w + \underbrace{(- Y_pg_w^*+y_\mathrm{ini})}_{w_0},
		\end{equation}
		where $M=\mathcal{N}(U_p)$ 
		and $g_w^*=U_p^\top \left(U_pU_p^\top\right)^{-1}u_{\mathrm{ini}}$.
		Moreover, any feasible noise $w$ satisfying \eqref{eq.yini-model-minmax} and \eqref{eq.yini-noiseconstraint-minmax} can be represented as in~\eqref{eq.w_parameterization} with the additional constraint
		\begin{equation}\label{eq.g_w_constraint}
		\begin{bmatrix}
		1\\
		g_w
		\end{bmatrix}^\top
		\underbrace{\begin{bmatrix}
			\Phi_{11}+w_0^\top\Phi_{12}^\top+\Phi_{12}w_0+w_0^\top\Phi_{22}w_0 & -\Phi_{12}Y_pM-w_0^\top\Phi_{22}Y_pM \\
			-M^\top Y_p^\top \Phi_{12}^\top -M^\top Y_p^\top\Phi_{22} w_0 & M^\top Y_p^\top\Phi_{22}Y_pM
			\end{bmatrix}}_{A_w}
		\begin{bmatrix}
		1\\
		g_w
		\end{bmatrix}\geq 0.
		\end{equation}
	\end{lemma}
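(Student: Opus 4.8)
The plan is to split the argument into the two claims of the lemma. First I would establish the affine parameterization~\eqref{eq.w_parameterization} of the set $\mathcal{W}$, and then I would substitute this parameterization into the noise constraint~\eqref{eq.yini-noiseconstraint-minmax} and collect terms to recover the quadratic form~\eqref{eq.g_w_constraint}.

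For the first claim, the key observation is that membership $w\in\mathcal{W}$ decouples: it is equivalent to the existence of a vector $g_\mathrm{ini}$ with $U_pg_\mathrm{ini}=u_\mathrm{ini}$ and $Y_pg_\mathrm{ini}=y_\mathrm{ini}-w$, where the first equation pins down the (noiseless) input part and the second defines the noise through the output part. Since $\{\bar u_k\}_{k=t_h}^{t_h+T_d-T_e-1}$ is persistently exciting of order $T_\mathrm{ini}+n$, the matrix $U_p$ is full row rank, hence $U_pU_p^\top$ is invertible and the first equation is solvable for every $u_\mathrm{ini}$. Its complete solution set is the affine subspace $g_\mathrm{ini}=g_w^*+Mg_w$, where $g_w^*=U_p^\top(U_pU_p^\top)^{-1}u_\mathrm{ini}$ is the minimum-norm particular solution (indeed $U_pg_w^*=u_\mathrm{ini}$), the columns of $M=\mathcal{N}(U_p)$ span $\ker(U_p)$, and $g_w\in\mathbb{R}^{n_w}$ is free. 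Substituting into $w=y_\mathrm{ini}-Y_pg_\mathrm{ini}$ gives $w=-Y_pMg_w+(y_\mathrm{ini}-Y_pg_w^*)$, which is exactly~\eqref{eq.w_parameterization} with $w_0=y_\mathrm{ini}-Y_pg_w^*$. I would then check both inclusions: every $w\in\mathcal{W}$ produces such a $g_w$, and conversely every $g_w$ yields a $w$ with $[u_\mathrm{ini}^\top,(y_\mathrm{ini}-w)^\top]^\top\in\mathrm{range}([U_p^\top,Y_p^\top]^\top)$ by reversing the computation, so the map is onto $\mathcal{W}$.

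For the second claim, I would write $w=w_0-Y_pMg_w$ and insert it into $\Phi_{11}+2\Phi_{12}w+w^\top\Phi_{22}w$, which is the expansion of the left-hand side of~\eqref{eq.yini-noiseconstraint-minmax}. Grouping the resulting terms by their degree in $g_w$ produces a constant part, a part linear in $g_w$, and a quadratic part; these are precisely the $(1,1)$, off-diagonal, and $(2,2)$ blocks of $A_w$, and the symmetrization into $[1;g_w]^\top A_w[1;g_w]$ is immediate because each scalar cross term equals its own transpose. Matching the three groups of coefficients then reproduces~\eqref{eq.g_w_constraint}.

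These computations are routine, so I do not expect a genuine obstacle. The only points deserving care are justifying the full row rank of $U_p$ (so that $g_w^*$ and the basis $M$ are well defined) and confirming that the parameterization captures all of $\mathcal{W}$ rather than a proper subset; both follow once the feasibility of~\eqref{eq.yini-model-minmax} is reduced to the plain linear system $U_pg_\mathrm{ini}=u_\mathrm{ini}$, whose solution set is standard.
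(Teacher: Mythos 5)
Your proposal is correct and matches the intended argument: the paper itself defers this proof to the cited reference \cite{RN11538}, but the lemma's own notation ($g_w^*$ as the minimum-norm solution of $U_pg_\mathrm{ini}=u_\mathrm{ini}$, $M=\mathcal{N}(U_p)$ spanning $\ker(U_p)$, $w_0=y_\mathrm{ini}-Y_pg_w^*$) encodes exactly your construction of parameterizing the affine solution set of $U_pg_\mathrm{ini}=u_\mathrm{ini}$, pushing it through $g_\mathrm{ini}\mapsto y_\mathrm{ini}-Y_pg_\mathrm{ini}$, and substituting into the quadratic noise constraint to obtain $A_w$. Your two points of care (full row rank of $U_p$ from persistent excitation of order $T_\mathrm{ini}+n$, and surjectivity of the parameterization onto $\mathcal{W}$) are precisely the right ones, and both are handled correctly.
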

	

	\subsection{Transformation of the Performance Specifications}\label{subsec.PerformanceTransformation}
	
	In this subsection, we show that for given feasible $w$, the output $y$ can be expressed in terms of $g_w$ and $u$, and that the performance specification constraint LQTE$(u,w)\leq\gamma$ can be transformed into a quadratic constraint on $g_w$.
	For given $u$ and feasible $w$, we first show how to compute $g$ in~\eqref{eq.yini-model-minmax}, which can further be used to calculate the output $y$.
	When $(u_{\mathrm{ini}}, y_{\mathrm{ini}}-w)$ is a feasible initial system trajectory, for any input $u$, there exists a $g$ verifying
	\begin{align}
		\begin{bmatrix}
			U_p\\
			Y_p\\
			U_f
		\end{bmatrix}g=
		\begin{bmatrix}
			u_{\mathrm{ini}}\\
			y_{\mathrm{ini}}-w\\
			u
		\end{bmatrix}.\label{eq.yini-FirstThreeSolution}
	\end{align}
	Although $g$ is not necessarily unique, all such $g$ produce the same $y$ (see Lemma~\ref{lemma.UniqueOutput}).
	In the following, we show that a candidate solution $g$ to~\eqref{eq.yini-FirstThreeSolution} is given by $g_\mathrm{ini} + g_u$, where $g_\mathrm{ini}=g_w^*+Mg_w$ is a solution to~\eqref{eq.yini-trajectoryconstraint}  and $g_u$ is a solution to
	
	\begin{equation}\label{eq.g_u}
	\begin{bmatrix}
	U_p\\
	Y_p\\
	U_f
	\end{bmatrix}g_u=
	\begin{bmatrix}
	\mathbf{0}\\
	\mathbf{0}\\
	u-U_fg_\mathrm{ini}
	\end{bmatrix}.
	\end{equation}
	Since $[\0^\top,\0^\top]^\top$ is a feasible initial system trajectory, in  view of Lemma~\ref{lemma.UniqueOutput}, there always exists a $g_u$ solving~\eqref{eq.g_u}.
	For explicitly characterizing $g_u$, we first introduce a preliminary lemma.
	\begin{lemma}\label{lemma.Yp2LinearCombination}
		There always exists a row permutation matrix $P_Y$ decomposing $Y_p$ as $P_YY_p = [Y_{p1}^\top,Y_{p2}^\top]^\top$ such that $\Lambda \triangleq[U_p^\top,Y_{p1}^\top,U_f^\top]^\top$ has full row rank and
		$\text{rank}(\Lambda)=\text{rank}([
		U_p^\top, Y_{p}^\top,	U_f^\top
		]^\top )$.   
		For such a $P_Y$, the rows of $Y_{p2}$ can be written as linear combinations of the rows of $[U_p^\top,  Y_{p1}^\top]^\top$.  
	\end{lemma}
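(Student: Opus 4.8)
The plan is to select the kept rows $Y_{p1}$ from the \emph{past} data block $[U_p^\top,Y_p^\top]^\top$ alone, rather than greedily from the full matrix $\Lambda$, so that the future inputs $U_f$ never enter the representation of $Y_{p2}$. The whole argument rests on two rank identities obtained from the Fundamental Lemma together with a behavioral dimension count, after which the permutation and both claims follow quickly.

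First I would pin down the rank of the past block. Since the restriction of $\bar u$ to a length-$T_{\mathrm{ini}}$ window is persistently exciting of order $T_{\mathrm{ini}}+n$ (as already observed at the start of Subsection~\ref{subsec.NoiseConstTransform}), Lemma~\ref{lem.WillemFundamentalLemma} shows that the columns of $[U_p^\top,Y_p^\top]^\top$ span the entire space of length-$T_{\mathrm{ini}}$ trajectories of $\mathcal G$. For a minimal system with $T_{\mathrm{ini}}\ge \mathbf{l}(\mathcal G)$ this restricted behavior has dimension $mT_{\mathrm{ini}}+n$ ($mT_{\mathrm{ini}}$ free input samples plus $n$ initial-state degrees of freedom, the latter recoverable from the outputs by observability), so $\mathrm{rank}([U_p^\top,Y_p^\top]^\top)=mT_{\mathrm{ini}}+n$. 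As $U_p$ is a row-subblock of the persistently exciting $U$, it has full row rank $mT_{\mathrm{ini}}$, hence exactly $n$ rows of $Y_p$ are independent of the rows of $U_p$. I then let $P_Y$ collect such a maximal independent set as $Y_{p1}$ (placed on top) and the remaining rows as $Y_{p2}$. By construction $\mathrm{rank}([U_p^\top,Y_{p1}^\top]^\top)=mT_{\mathrm{ini}}+n=\mathrm{rank}([U_p^\top,Y_p^\top]^\top)$, and since the row space of $[U_p^\top,Y_{p1}^\top]^\top$ is contained in that of $[U_p^\top,Y_p^\top]^\top$, the two coincide. Every row of $Y_p$, and in particular every row of $Y_{p2}$, therefore lies in the row space of $[U_p^\top,Y_{p1}^\top]^\top$, which is precisely the last assertion of the lemma and, crucially, involves no $U_f$.

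It remains to check that $\Lambda=[U_p^\top,Y_{p1}^\top,U_f^\top]^\top$ has full row rank and attains $\mathrm{rank}([U_p^\top,Y_p^\top,U_f^\top]^\top)$. For the second rank identity I use that the full data matrix $[U^\top,Y^\top]^\top$ has rank $mL+n$ with $L=T_{\mathrm{ini}}+T_e$ (the same dimension count applied to length-$L$ trajectories), while by Lemma~\ref{lemma.UniqueOutput} each row of $Y_f$ is a linear image of $[U_p^\top,Y_p^\top,U_f^\top]^\top$; deleting the determined rows $Y_f$ leaves the row space unchanged, so $\mathrm{rank}([U_p^\top,Y_p^\top,U_f^\top]^\top)=mL+n$. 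Comparing with $\mathrm{rank}([U_p^\top,Y_p^\top]^\top)=mT_{\mathrm{ini}}+n$, the $mT_e$ rows of $U_f$ raise the rank by exactly $mT_e$, so they are linearly independent of one another and of the row space of $[U_p^\top,Y_p^\top]^\top$. Since the latter contains the row space of $[U_p^\top,Y_{p1}^\top]^\top$, I obtain $\mathrm{rank}(\Lambda)=(mT_{\mathrm{ini}}+n)+mT_e=mL+n=\mathrm{rank}([U_p^\top,Y_p^\top,U_f^\top]^\top)$, giving both full row rank and the claimed rank equality.

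The one step demanding care is the \emph{localization} of the dependency: a direct greedy extraction of independent rows from $\Lambda$ would generally express $Y_{p2}$ through $U_f$ as well, which the lemma forbids. The remedy, namely choosing $Y_{p1}$ inside the past block and invoking $\mathrm{rank}([U_p^\top,Y_p^\top]^\top)=mT_{\mathrm{ini}}+n$, is exactly where the causal structure of the LTI system enters (past outputs are fixed by past inputs and the initial state, never by future inputs), so establishing this rank value through the behavioral dimension count is the crux of the argument.
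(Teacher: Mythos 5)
Your proof is correct, but it follows a genuinely different route from the paper's. The paper dismisses the existence of $P_Y$ as straightforward (omitting the proof) and concentrates on the localization claim, which it proves by contradiction: if some row of $Y_{p2}$ needed the rows of $U_f$ with nontrivial coefficients, then the elementary row operations zeroing out $Y_{p2}$ would force, on the right-hand side of \eqref{eq.g_u}, a constraint of the form $0 = \text{(nontrivial combination of rows of }u-U_fg_\mathrm{ini})$, which fails for some input $u$ and contradicts the feasibility of \eqref{eq.g_u} for every $u$ (guaranteed by Lemma~\ref{lemma.UniqueOutput}). You instead argue constructively through explicit rank identities: using Lemma~\ref{lem.WillemFundamentalLemma} and the dimension count $mT+n$ for the restricted behavior (valid since $T_\mathrm{ini}\ge \mathbf{l}(\mathcal{G})$), you get $\mathrm{rank}([U_p^\top,Y_p^\top]^\top)=mT_\mathrm{ini}+n$ and, after deleting the $Y_f$ rows determined via Lemma~\ref{lemma.UniqueOutput}, $\mathrm{rank}([U_p^\top,Y_p^\top,U_f^\top]^\top)=m(T_\mathrm{ini}+T_e)+n$; selecting $Y_{p1}$ as a maximal independent set \emph{within the past block} then makes the localization claim immediate and the rank arithmetic delivers full row rank of $\Lambda$. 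Your approach buys more than the paper's: it proves the existence statement the paper omits, it shows $Y_{p1}$ has exactly $n$ rows, and it replaces the contradiction/causality argument by transparent rank bookkeeping; the paper's argument, in exchange, needs no dimension formulas for the behavior and works directly from feasibility of the simulation equation. Both hinge on the same underlying fact—future inputs are free, so past outputs cannot depend on them—expressed once as a feasibility obstruction and once as the rank increment $mT_e$ contributed by $U_f$.
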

	
	\begin{proof}
		It is straightforward to show the existence of such a $P_Y$; therefore, the proof of this fact is omitted here.
		We apply the following row permutation to~\eqref{eq.g_u} 
		\begin{align}
			\begin{bmatrix}
				\mathbf{I} & \mathbf{0} & \mathbf{0} \\
				\mathbf{0} & P_Y & \mathbf{0} \\
				\mathbf{0} & \mathbf{0} & \mathbf{I}
			\end{bmatrix}\begin{bmatrix}
				U_p\\
				Y_{p}\\
				U_f
			\end{bmatrix}g=\begin{bmatrix}
				U_p\\
				Y_{p1}\\
				Y_{p2}\\
				U_f
			\end{bmatrix}g=
			\begin{bmatrix}
				\0\\
				\0\\
				\0\\
				u-U_fg_\mathrm{ini}
			\end{bmatrix}.
			\label{eq.yini-initialtrajectoryconstraint}
		\end{align}
		By definition, the rows of $Y_{p2}$ can be written as linear combinations of the rows of $[U_p^\top,Y_{p1}^\top,U_f^\top]^\top$. Therefore, there exists an ordered sequence of elementary row operations $\{E_k\}_{k=1}^{e}$ captured by the matrix $E\triangleq E_eE_{e-1}\dots E_1$ such that
		\begin{align*}
			E\begin{bmatrix}
				U_p^\top&
				Y_{p1}^\top&
				Y_{p2}^\top&
				U_f^\top
			\end{bmatrix}^\top=
			\begin{bmatrix}
				U_p^\top&
				Y_{p1}^\top&
				\0&
				U_f^\top
			\end{bmatrix}^\top.
		\end{align*}
		
		Suppose, by contradiction, that the rows of $Y_{p2}$ cannot be written as linear combinations of the rows of $[U_p^\top,Y_{p1}^\top]^\top$.
		Then, applying $E$ to both sides of~\eqref{eq.yini-initialtrajectoryconstraint}, we would obtain
		\begin{align}
			\begin{bmatrix}
				U_p\\
				Y_{p1}\\
				\0\\
				U_f
			\end{bmatrix}g=
			\begin{bmatrix}
				\0\\
				\0\\
				\text{linear combination of rows of }u-U_fg_\mathrm{ini}\\
				u-U_fg_\mathrm{ini}
			\end{bmatrix}.
			\label{eq: permutationcontradiction}
		\end{align}
		For any $g_{\mathrm{ini}}$, there exists a $u$ such that \eqref{eq: permutationcontradiction} does not admit a solution, which contradicts the fact that~\eqref{eq.g_u} is feasible.
	\end{proof}
	
	In the next lemma, we give an explicit formula for $g_u$ solving~\eqref{eq.g_u}.
	\begin{lemma}\label{lem.g_u_solution}
		A solution to~\eqref{eq.g_u} is provided by 
		\begin{equation}\label{eq.g_u_solution}
		g_u=\Lambda^\top (\Lambda\Lambda^\top)^{-1}\begin{bmatrix}
		\mathbf{0}\\
		\mathbf{0}\\
		u-U_fg_\mathrm{ini}
		\end{bmatrix}.
		\end{equation}
	\end{lemma}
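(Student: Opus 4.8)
The plan is to verify directly that the candidate $g_u$ in~\eqref{eq.g_u_solution} satisfies every block row of~\eqref{eq.g_u}, working with the row-permuted form~\eqref{eq.yini-initialtrajectoryconstraint}. The key structural fact is that the formula is built only from $\Lambda=[U_p^\top,Y_{p1}^\top,U_f^\top]^\top$, which by Lemma~\ref{lemma.Yp2LinearCombination} has full row rank; hence $\Lambda\Lambda^\top$ is invertible and $g_u=\Lambda^\top(\Lambda\Lambda^\top)^{-1}b$ is well defined, where $b\triangleq[\0^\top,\0^\top,(u-U_fg_\mathrm{ini})^\top]^\top$ is the permuted right-hand side restricted to the rows of $\Lambda$. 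First I would observe that this $g_u$ solves the reduced system $\Lambda g_u=b$, since $\Lambda\bigl(\Lambda^\top(\Lambda\Lambda^\top)^{-1}b\bigr)=b$. Reading off the block rows of $\Lambda g_u=b$ then yields $U_pg_u=\0$, $Y_{p1}g_u=\0$, and $U_fg_u=u-U_fg_\mathrm{ini}$, which already account for three of the four block rows appearing in~\eqref{eq.yini-initialtrajectoryconstraint}.

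The remaining step is to handle the dropped block $Y_{p2}$, i.e.\ to show $Y_{p2}g_u=\0$. This is exactly where the conclusion of Lemma~\ref{lemma.Yp2LinearCombination} enters: since the rows of $Y_{p2}$ are linear combinations of the rows of $[U_p^\top,Y_{p1}^\top]^\top$, there is a matrix $T$ with $Y_{p2}=T[U_p^\top,Y_{p1}^\top]^\top$. Substituting the already-established identities $U_pg_u=\0$ and $Y_{p1}g_u=\0$ gives $Y_{p2}g_u=T[(U_pg_u)^\top,(Y_{p1}g_u)^\top]^\top=\0$. Combining $Y_{p1}g_u=\0$ with $Y_{p2}g_u=\0$ and undoing the row permutation $P_Y$ produces $Y_pg_u=\0$. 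Together with $U_pg_u=\0$ and $U_fg_u=u-U_fg_\mathrm{ini}$, this establishes that the candidate satisfies all block rows of~\eqref{eq.g_u}, completing the verification.

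I expect the only genuine subtlety to be the justification that discarding $Y_{p2}$ from the pseudo-inverse is harmless: a priori, solving the reduced system $\Lambda g_u=b$ need not solve the full system involving $Y_p$. The point that rescues the argument is that the $Y_{p2}$-constraints are not independent --- they are automatically enforced once the $U_p$- and $Y_{p1}$-constraints hold --- and this is precisely the content of Lemma~\ref{lemma.Yp2LinearCombination}. Everything else is a routine check that the Moore--Penrose-type expression reproduces the right-hand side, which requires nothing beyond the invertibility of $\Lambda\Lambda^\top$ guaranteed by full row rank.
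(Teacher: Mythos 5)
Your proposal is correct and follows essentially the same route as the paper: verify that the pseudo-inverse formula solves the reduced full-row-rank system $\Lambda g_u=[\0^\top,\0^\top,(u-U_fg_\mathrm{ini})^\top]^\top$, then use Lemma~\ref{lemma.Yp2LinearCombination} to conclude that the discarded $Y_{p2}$ rows are satisfied automatically, and undo the permutation $P_Y$. The only difference is presentational --- the paper carries out the $Y_{p2}$ step by invoking the elementary-row-operation matrix $E$ from the proof of Lemma~\ref{lemma.Yp2LinearCombination} and left-multiplying by $E^{-1}$, whereas you substitute an explicit coefficient matrix $T$ with $Y_{p2}=T[U_p^\top,Y_{p1}^\top]^\top$, which is the same argument stated more directly.
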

	
	\begin{proof}
		The vector $g_u$ in~\eqref{eq.g_u_solution} satisfies $\Lambda g_u = [\0,\0,(u-U_fg_\mathrm{ini})^\top]^\top$, and, therefore, 
		%
		\begin{align} \label{eq.temp1}
			\begin{bmatrix}
				U_p\\
				Y_{p1}\\
				\0\\
				U_f
			\end{bmatrix}g_u=
			\begin{bmatrix}
				\0\\
				\0\\
				\0\\
				u-U_f g_\mathrm{ini}
			\end{bmatrix}.
		\end{align}
		In view of the proof of Lemma~\ref{lemma.Yp2LinearCombination}, there exists a matrix $E$ satisfying
		\begin{align} \label{eq:1}
			E
			\begin{bmatrix}
				U_p\\
				Y_{p1}\\
				Y_{p2}\\
				U_f
			\end{bmatrix}=
			\begin{bmatrix}
				U_p\\
				Y_{p1}\\
				\0\\
				U_f
			\end{bmatrix}~\text{and}~
			E
			\begin{bmatrix}
				\0\\
				\0\\
				\0\\
				u-U_f g_\mathrm{ini}
			\end{bmatrix}=
			\begin{bmatrix}
				\0\\
				\0\\
				\0\\
				u-U_f g_\mathrm{ini}
			\end{bmatrix}. 
		\end{align}
		Left-multiplying both sides of~\eqref{eq.temp1} by $E^{-1}$, in view of~\eqref{eq:1}, one obtains
		$[
		U_p^\top,
		Y_{p1}^\top,
		Y_{p2}^\top,
		U_f^\top
		]^\top g_u
		=
		[
		\0,
		\0,
		\0,
		(u-U_f g_\mathrm{ini})^\top
		]^\top$.
		From~\eqref{eq.yini-initialtrajectoryconstraint}, we conclude the proof by showing that
		\begin{align*}
			& \begin{bmatrix}
				U_p\\
				Y_p\\
				U_f
			\end{bmatrix}g_u= \begin{bmatrix}
				\I & \0 & \0 \\
				\0 & P_Y^{-1}& \0 \\
				\0 & \0 & \I
			\end{bmatrix}
			\begin{bmatrix}
				U_p\\
				\begin{bmatrix}
					Y_{p1}\\
					Y_{p2}            
				\end{bmatrix}\\
				U_f
			\end{bmatrix}g_u=
			\begin{bmatrix}
				\I & \0 & \0 \\
				\0 & P_Y^{-1}& \0 \\
				\0 & \0 & \I
			\end{bmatrix}
			\begin{bmatrix}
				\0\\
				\begin{bmatrix}
					\0\\
					\0
				\end{bmatrix}\\
				u-U_f g_\mathrm{ini}
			\end{bmatrix}
			=
			\begin{bmatrix}
				\0\\
				\begin{bmatrix}
					\0\\
					\0
				\end{bmatrix}\\
				u-U_f g_\mathrm{ini}
			\end{bmatrix}.
		\end{align*}
	\end{proof}
	
	Given an explicit solution for $g_u$ and, therefore, a solution $g$ for~\eqref{eq.yini-FirstThreeSolution}, we next derive the expression of $y$ and characterize the performance specification LQTE$(u,w)\leq\gamma$  in terms of $g_w$. The proof can be found in~\cite{RN11538}.
	
	\begin{lemma}\label{lem.y_LQTE_parameterization}
		Consider the matrix $\Lambda$ defined in Lemma~\ref{lemma.Yp2LinearCombination}. Given a feasible noise $w\in\mathcal{W}$ and a control sequence $u$, the unique output $y$ satisfying~\eqref{eq.yini-model-minmax} is given by 
		\begin{equation}\label{eq.y_parameterization}
		y = B_uu+B_wg_w+y_0,
		\end{equation}
		where $y_0 = B_\mathrm{ini}g_w^*$,  $B_w = B_\mathrm{ini}M$,
		\begin{align*}
			B_\mathrm{ini} = Y_f\left(\I+\Lambda^\top (\Lambda\Lambda^\top)^{-1}
			\begin{bmatrix}
				\0\\
				\0\\
				-U_f
			\end{bmatrix}\right),\quad
			B_u = Y_f \Lambda^\top (\Lambda\Lambda^\top)^{-1}
			\begin{bmatrix}
				\0\\
				\0\\
				\I
			\end{bmatrix}.
		\end{align*}
		Moreover,  the performance specification $\mathrm{LQTE}(u,w)\leq\gamma$ can be equivalently expressed as
		\begin{align}
			\begin{bmatrix}
				1 \\
				g_w
			\end{bmatrix}^\top
			\underbrace{\begin{bmatrix}
					\gamma-u^\top\bar{R}u-(B_uu+y_0-r)^\top\bar{Q}(B_uu+y_0-r) & -(B_uu+y_0-r)^\top\bar{Q}B_w \\
					-B_w^\top\bar{Q}(B_uu+y_0-r) & -B_w^\top\bar{Q}B_w
			\end{bmatrix}}_{Q_g(u,\gamma)}
			\begin{bmatrix}
				1\\
				g_w
			\end{bmatrix}\ge 0, \label{yini-performancequadraticform}
		\end{align}
		where $\bar{R}=\I\otimes R$, $\bar{Q}=\I\otimes Q$ and $\Lambda$ is defined in Lemma~\ref{lemma.Yp2LinearCombination}.
	\end{lemma}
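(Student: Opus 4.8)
The plan is to verify both claims by direct substitution, since the structural work has already been carried out in Lemmas~\ref{lem.g_w_parameterization} and~\ref{lem.g_u_solution}; no new idea is required beyond careful bookkeeping. For the expression of $y$ in~\eqref{eq.y_parameterization}, I would begin from the decomposition $g = g_\mathrm{ini} + g_u$ of a solution to~\eqref{eq.yini-FirstThreeSolution}, where $g_\mathrm{ini} = g_w^* + Mg_w$ solves~\eqref{eq.yini-trajectoryconstraint} and $g_u$ is given in closed form by~\eqref{eq.g_u_solution}. Since $y = Y_fg$ and this output is independent of the particular solution $g$ by Lemma~\ref{lemma.UniqueOutput}, I compute
\begin{equation*}
y = Y_fg_\mathrm{ini} + Y_f\Lambda^\top(\Lambda\Lambda^\top)^{-1}\begin{bmatrix}\0\\\0\\u-U_fg_\mathrm{ini}\end{bmatrix}.
\end{equation*}
The key step is to split the last block of the right-hand vector as $\begin{bmatrix}\0\\\0\\\I\end{bmatrix}u + \begin{bmatrix}\0\\\0\\-U_f\end{bmatrix}g_\mathrm{ini}$, so that the coefficient of $u$ is exactly $B_u$, while collecting the two terms proportional to $g_\mathrm{ini}$ yields $Y_f + Y_f\Lambda^\top(\Lambda\Lambda^\top)^{-1}\begin{bmatrix}\0\\\0\\-U_f\end{bmatrix} = B_\mathrm{ini}$. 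Substituting $g_\mathrm{ini} = g_w^* + Mg_w$ then gives $y = B_uu + B_\mathrm{ini}g_w^* + B_\mathrm{ini}Mg_w$, which is~\eqref{eq.y_parameterization} upon identifying $y_0 = B_\mathrm{ini}g_w^*$ and $B_w = B_\mathrm{ini}M$.

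For the quadratic reformulation of the performance specification, I would first vectorize the cost as $\mathrm{LQTE}(u,w) = u^\top\bar{R}u + (y-r)^\top\bar{Q}(y-r)$ with $\bar{R} = \I\otimes R$ and $\bar{Q} = \I\otimes Q$. Substituting~\eqref{eq.y_parameterization} and abbreviating $v \triangleq B_uu + y_0 - r$ so that $y - r = v + B_wg_w$, I expand
\begin{equation*}
\mathrm{LQTE}(u,w) = u^\top\bar{R}u + v^\top\bar{Q}v + 2v^\top\bar{Q}B_wg_w + g_w^\top B_w^\top\bar{Q}B_wg_w,
\end{equation*}
merging the two cross terms via the symmetry of $\bar{Q}$. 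Rewriting $\mathrm{LQTE}(u,w)\le\gamma$ as $\gamma - \mathrm{LQTE}(u,w)\ge 0$ and grouping the constant, linear, and quadratic parts in $g_w$ reproduces the quadratic form $\begin{bmatrix}1\\g_w\end{bmatrix}^\top Q_g(u,\gamma)\begin{bmatrix}1\\g_w\end{bmatrix}\ge 0$ of~\eqref{yini-performancequadraticform}: the $(1,1)$ entry collects $\gamma - u^\top\bar{R}u - v^\top\bar{Q}v$, the $(2,2)$ block is $-B_w^\top\bar{Q}B_w$, and the off-diagonal blocks are $-v^\top\bar{Q}B_w$ and its transpose.

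I do not expect a conceptual obstacle here; the difficulty is entirely in the bookkeeping with block-partitioned matrices and the quadratic expansion. The two points deserving care are (i) invoking Lemma~\ref{lemma.UniqueOutput} to certify that the candidate $g = g_\mathrm{ini} + g_u$ produces the genuine unique output, and not merely a feasible one, and (ii) keeping the signs of the two cross terms consistent so that $Q_g(u,\gamma)$ emerges symmetric with the correct off-diagonal blocks. I anticipate (ii) --- tracking signs through $v = B_uu + y_0 - r$ --- to be the most error-prone part of writing out the details.
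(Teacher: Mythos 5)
Your proposal is correct and follows exactly the route the paper intends: it uses the decomposition $g = g_\mathrm{ini} + g_u$ with the closed-form $g_u$ from Lemma~\ref{lem.g_u_solution}, invokes Lemma~\ref{lemma.UniqueOutput} for uniqueness of $y = Y_f g$, and then vectorizes the cost and expands the quadratic form in $[1,\; g_w^\top]^\top$ --- which is precisely the scaffolding the paper builds in Section~\ref{subsec.PerformanceTransformation} before deferring the details to the extended version. Both computations (the split of $u - U_f g_\mathrm{ini}$ yielding $B_u$ and $B_\mathrm{ini}$, and the sign-consistent grouping of constant, cross, and quadratic terms into $Q_g(u,\gamma)$) check out.
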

	
	
	\subsection{Main Result} \label{subsec.MainResult}
	
	The following theorem leverages the results obtained in Lemmas~\ref{lem.g_w_parameterization} and~\ref{lem.y_LQTE_parameterization} to show that the minimization problem~\eqref{eq.minimization_problem}, and hence \textbf{P1}, are equivalent to a minimization problem with a linear cost and LMI constraints.
	
	\begin{theorem}\label{thm.LMIopt}
		The robust control problem \textbf{P1} is equivalent to solving
		\begin{gather} \label{eq.finalOpt}
			\min_{u, \gamma, \alpha\ge 0}\; \gamma\\
			\mathrm{s.t.},
			\begin{bmatrix}
				{(\bar{R}+B_u^\top\bar{Q}B_u)}^{-1}&
				\begin{bmatrix}
					u & \0      
				\end{bmatrix}\\
				\begin{bmatrix}
					u^\top\\
					\0
				\end{bmatrix}& Q_g^a(u,\gamma)-\alpha A_w
			\end{bmatrix}\ge 0,\label{eq.finalLMI}
		\end{gather}
		where
                \begin{multline*}
                  			Q_g^a(u, \gamma)=\\
                  \begin{bmatrix}
                    \gamma-(B_uu)^\top\bar{Q}(y_0-r)
                    -(y_0-r)^\top\bar{Q}(B_uu)
                    -(y_0-r)^\top\bar{Q}(y_0-r)                  
                    & -(B_uu+y_0-r)^\top\bar{Q}B_w \\
					-B_w^\top\bar{Q}(B_uu+y_0-r) & -B_w^\top\bar{Q}B_w
			\end{bmatrix}.
                \end{multline*}
			\end{theorem}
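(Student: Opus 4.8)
The plan is to reduce the semi-infinite program~\eqref{eq.minimization_problem} to the finite LMI program~\eqref{eq.finalOpt} in three moves: rewrite the worst-case constraint as a quadratic implication in the single variable $g_w$, discharge the universal quantifier over the noise with the S-lemma, and linearize the residual quadratic-in-$u$ term with a Schur complement. First I would substitute the two parameterizations already established. By Lemma~\ref{lem.g_w_parameterization}, a noise is feasible for~\eqref{eq.yini-model-minmax}--\eqref{eq.yini-noiseconstraint-minmax} if and only if it is generated by some $g_w$ satisfying the quadratic constraint~\eqref{eq.g_w_constraint} with matrix $A_w$; and by Lemma~\ref{lem.y_LQTE_parameterization}, for that same $g_w$ the specification $\mathrm{LQTE}(u,w)\le\gamma$ is exactly~\eqref{yini-performancequadraticform} with matrix $Q_g(u,\gamma)$. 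Hence the robust constraint of~\eqref{eq.minimization_problem} becomes the requirement that, for every $g_w$, $\begin{bmatrix}1\\g_w\end{bmatrix}^\top A_w\begin{bmatrix}1\\g_w\end{bmatrix}\ge 0$ implies $\begin{bmatrix}1\\g_w\end{bmatrix}^\top Q_g(u,\gamma)\begin{bmatrix}1\\g_w\end{bmatrix}\ge 0$.

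Second, I would invoke the S-lemma~\citep{polik2007survey} for this pair of (inhomogeneous) quadratic forms in $g_w$. Provided a Slater point exists, i.e. some $\bar g_w$ with $\begin{bmatrix}1\\\bar g_w\end{bmatrix}^\top A_w\begin{bmatrix}1\\\bar g_w\end{bmatrix}>0$ --- which transfers from any strictly feasible $w\in\mathcal{W}$ for~\eqref{eq.yini-noiseconstraint}, since the two forms coincide under the affine map~\eqref{eq.w_parameterization} used to build $A_w$ --- the implication is equivalent to the existence of a multiplier $\alpha\ge 0$ with $Q_g(u,\gamma)-\alpha A_w\ge 0$. This trades the quantifier over $w$ for the single scalar $\alpha\ge 0$, so~\eqref{eq.minimization_problem} becomes $\min_{u,\gamma,\alpha\ge 0}\gamma$ subject to $Q_g(u,\gamma)-\alpha A_w\ge 0$.

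Finally, I would convert $Q_g(u,\gamma)-\alpha A_w\ge 0$ into the LMI~\eqref{eq.finalLMI}. Inspecting~\eqref{yini-performancequadraticform}, the only non-affine dependence on $u$ is the term $-u^\top(\bar R+B_u^\top\bar Q B_u)u$ sitting in the $(1,1)$ block of $Q_g$; collecting it gives $Q_g(u,\gamma)=Q_g^a(u,\gamma)-\begin{bmatrix}u^\top(\bar R+B_u^\top\bar Q B_u)u & \0\\ \0 & \0\end{bmatrix}$, where $Q_g^a$ is affine in $(u,\gamma)$. Assuming $\bar R+B_u^\top\bar Q B_u>0$, the subtracted term equals $\begin{bmatrix}u^\top\\ \0\end{bmatrix}(\bar R+B_u^\top\bar Q B_u)\begin{bmatrix}u & \0\end{bmatrix}$, so a Schur complement with respect to the pivot block $(\bar R+B_u^\top\bar Q B_u)^{-1}$ shows that~\eqref{eq.finalLMI} holds if and only if $Q_g(u,\gamma)-\alpha A_w\ge 0$. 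Since every block of~\eqref{eq.finalLMI} is affine in $(u,\gamma,\alpha)$, problem~\eqref{eq.finalOpt} is a genuine linear-cost LMI program, and because~\eqref{eq.minimization_problem} is equivalent to~\textbf{P1}, the claim follows.

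I expect the main obstacle to be justifying the S-lemma's strict-feasibility hypothesis directly in terms of the transformed data $A_w$ rather than the original $\Phi$, together with securing $\bar R+B_u^\top\bar Q B_u>0$ so that the Schur pivot $(\bar R+B_u^\top\bar Q B_u)^{-1}$ in~\eqref{eq.finalLMI} is well defined; once these are in place, the matrix bookkeeping relating $Q_g$, $Q_g^a$, and the Schur complement is routine.
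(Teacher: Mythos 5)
Your proof follows essentially the same route as the paper's: substitute the parameterizations of Lemmas~\ref{lem.g_w_parameterization} and~\ref{lem.y_LQTE_parameterization} to reduce the robust constraint to a quadratic implication in $g_w$, apply the S-lemma to trade the universal quantifier for a multiplier $\alpha\ge 0$ giving $Q_g(u,\gamma)-\alpha A_w\ge 0$, and then linearize the $-u^\top(\bar{R}+B_u^\top\bar{Q}B_u)u$ term via a Schur complement with pivot $(\bar{R}+B_u^\top\bar{Q}B_u)^{-1}$. The two regularity conditions you flag at the end (a Slater point for the $A_w$ constraint, and positive definiteness of $\bar{R}+B_u^\top\bar{Q}B_u$) are genuinely needed but are left implicit in the paper's own proof, so your account is, if anything, slightly more careful.
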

	
	\begin{proof}
		Based on the feasible noise parameterization and performance specification transformations, the minimization problem~\eqref{eq.minimization_problem} is equivalent to 
		\begin{gather*}
			\min_{u,\gamma}  \gamma \quad
			\text{s.t.},   \eqref{yini-performancequadraticform} ~\text{holds}~ \forall g_w \text{ satisfying } \eqref{eq.g_w_constraint}.
		\end{gather*}
		In view of the S-lemma~\citep{polik2007survey}, the constraint of this minimization problem holds if and only if there exist $u$ and $\alpha\ge 0$ such that
		$Q_g(u, \gamma)-\alpha A_w\ge 0$.
		Using Schur complement~\citep{boyd1994linear}, the above matrix inequality can be transformed to the LMI in~\eqref{eq.finalLMI}.
		Minimizing the performance index $\gamma$ further gives the solution of~\eqref{eq.minimization_problem} and hence \textbf{P1}. 
	\end{proof}
        
\begin{remark}\label{rem:nonconservative}
      	The results presented in Theorem~\ref{thm.LMIopt} are non-conservative, i.e., the minimization problem~\eqref{eq.finalOpt},~\eqref{eq.finalLMI} is \textit{equivalent} to the min-max optimization problem in \textbf{P1}. Therefore, there exists a \textit{feasible} noise vector $w$ such that LQTE$(u, w)=\gamma^*$, where $\gamma^*$ is the minimizer of~\eqref{eq.finalOpt},~\eqref{eq.finalLMI}.
\end{remark}
        
\begin{remark}
  The proposed control design can easily be applied in a receding horizon fashion, in order to implement a data-driven predictive controller. In doing so, at each time $t$, one needs to update the output reference $r$, as well as recent input and output data $u_\mathrm{ini}$ and $y_\mathrm{ini}$ with the online data, solve~\eqref{eq.finalOpt},~\eqref{eq.finalLMI}, and apply only the first control input from the computed optimal control sequence $u^*$.
  Moreover, it can be shown that the resulting controller is equivalent to a robust model predictive controller (MPC) with bounded uncertainty on the initial state.
  As such, the stability of the resulting closed-loop system can be studied using the existing results on robust MPC.
\end{remark}
\vspace{-0.5cm}
	\section{Simulations}\label{subsec.Simulation}
	
	We consider the control of an unstable LTI system in the form of~\eqref{eq.LTI_system_ss} with the randomly selected matrices \vspace{-0.7cm}
	
	\begin{equation*}
	\begin{split}
	A &= \begin{bmatrix}
	0.8768 & 0.4147 & 0.0678 \\
	0.3934 & -0.6436 & -0.2961 \\
	-0.7907 & 0.7055 & 0.1587
	\end{bmatrix} \quad
	B = \begin{bmatrix}
	0.9567 \\
	0.1039 \\
	-0.2155
	\end{bmatrix} \quad
	C = \begin{bmatrix}
	0.4164 \\
	-0.7185 \\
	-0.9618
	\end{bmatrix}^\top
	\end{split} \quad D=0.
	\end{equation*}
	By solving (\ref{eq.finalOpt})-(\ref{eq.finalLMI}), we aim to calculate the optimal inputs $u^*$ and the resulting worst-case cost $\gamma^*$ to regulate the system to zero outputs. We justify the non-conservativeness of our algorithm by showing, through multiple noise realizations, that $\gamma^*$ is not an overestimate of the actual linear-quadratic tracking errors.
	
	With a random initial state, \textit{historical} input-output data of length $T_d=100$ are collected from this system with inputs sampled from a uniform distribution in the interval $[-1,1]$. We assume the exact order $n=3$ to be unknown and let $T_{\text{ini}} = 4$. Prior to the optimal control horizon, we measure the \textit{recent} data $\{u_\mathrm{ini},y_\mathrm{ini}\}$ of length $T_{\text{ini}}$ where the inputs are generated in the same way as those in historical data. Moreover, this recent data is corrupted by a noise trajectory $w$ verifying the quadratic constraint~\eqref{eq.yini-noiseconstraint} with  
	$\Phi_{11} = T_\mathrm{ini}p\epsilon,\enskip \Phi_{12}=\0,\enskip \Phi_{22} = -\I$,
	and $\epsilon=0.001$.
	
    In \textbf{P1}, we select $T_e=20$ and $r=\0$ to robustly regulate the output of the system to zero within a horizon of length $20$. The LMI minimization problem~\eqref{eq.finalOpt} is solved using Yalmip~\citep{Lofberg2004} and MOSEK \citep{mosek}. The optimal control sequence $u^*$ is tested with multiple compatible realizations of noise trajectories. In particular, we randomly select $100$ $g_w$ vectors verifying the quadratic constraint~\eqref{eq.g_w_constraint}. These realizations parameterize $100$ feasible trajectories of $w$, each verifying the quadratic constraint in~\eqref{eq.yini-noiseconstraint}. As shown in Figure~\ref{fig.LTI_outputs}, output trajectories quickly converge to a neighborhood of zero for all selected noise realizations. 
	
	
	\begin{figure}
    \begin{minipage}[b]{.5\linewidth}
        \centering
        \includegraphics[width=\textwidth]{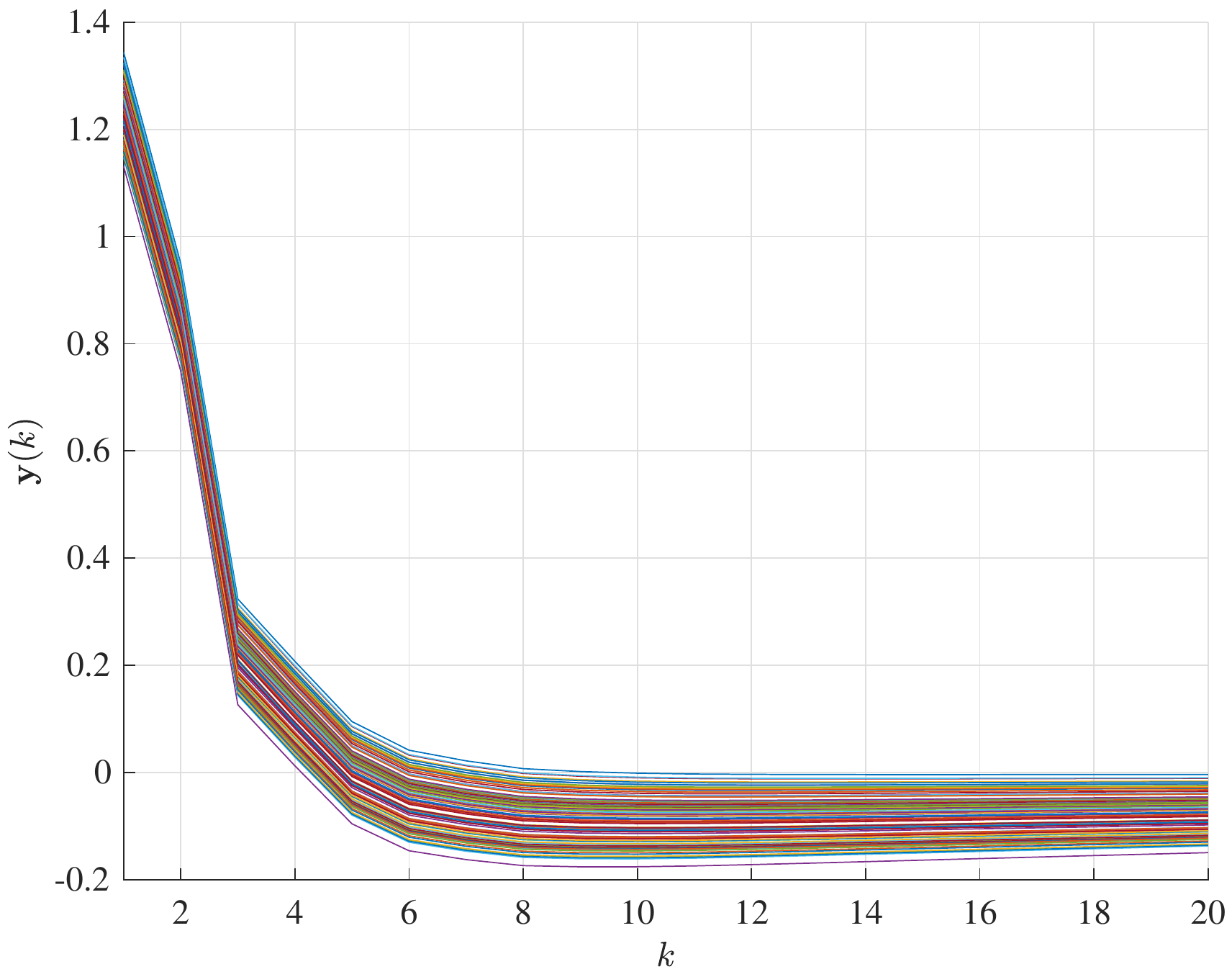}
        \caption{Closed-loop output trajectories for different noise realizations}	\label{fig.LTI_outputs}
      \end{minipage}%
      \vspace{-0.5cm}
    \hspace{.3cm}
      \begin{minipage}[b]{.5\linewidth}
        \centering
        \includegraphics[width=1\textwidth]{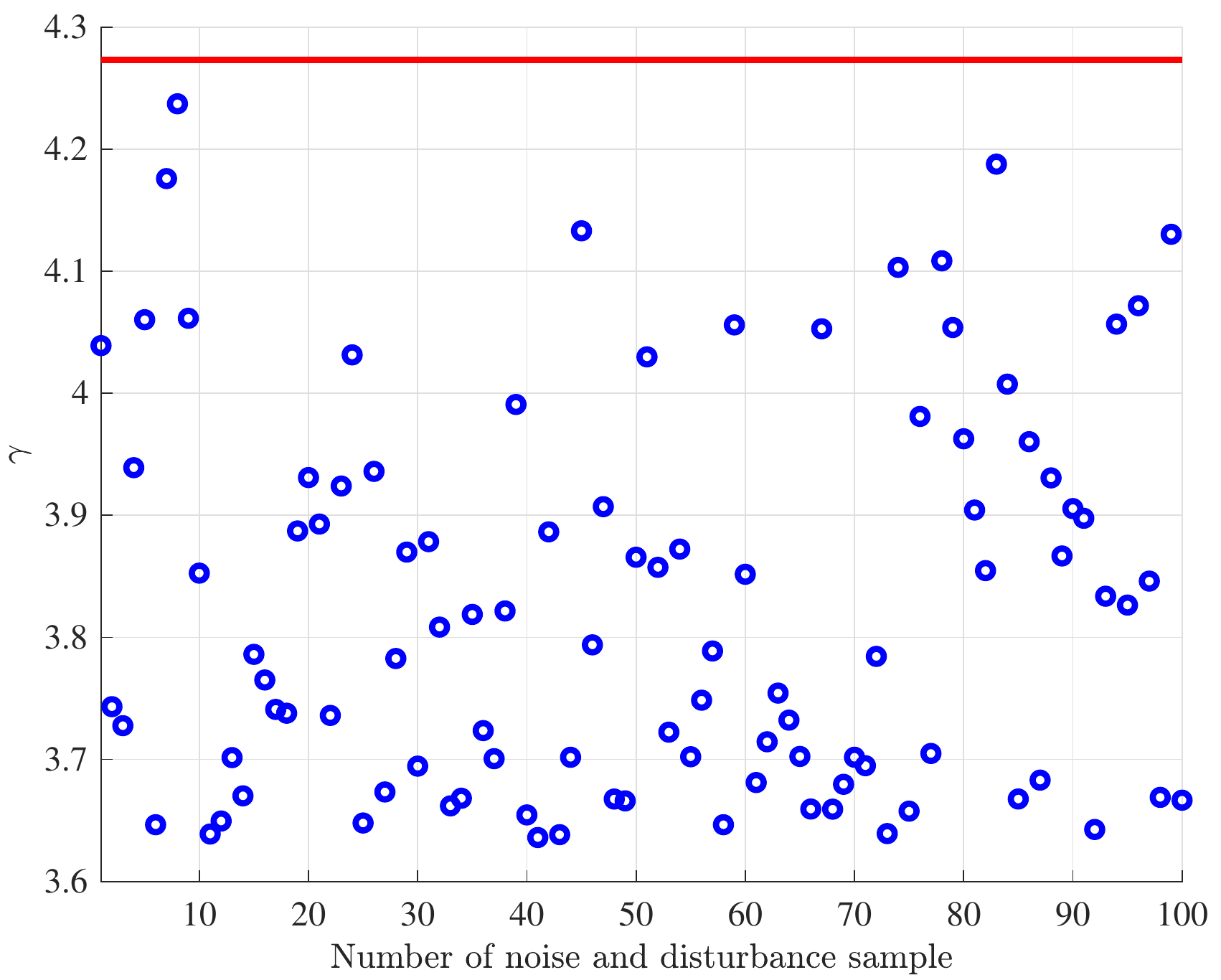}
        \caption{Costs for different noise realiza-\\tions compared with $\gamma^*$ (red line) }\label{fig.LTI_constraints}
      \end{minipage}
      \vspace{-0.5cm}
\end{figure}

	
As seen in Figure~\ref{fig.LTI_constraints}, the costs $\gamma=y^\top Q y+{u^*}^\top Ru^*$ of all the noise realizations (show in blue circles) are below the solution $\gamma^*$ to \eqref{eq.finalOpt} (shown in red line). Besides, one can spot a cost realization close to $\gamma^*$, which indicates that $\gamma^*$ is not a conservative estimate.

	\section{Conclusion}\label{sec.Conclusion}
	In this paper, we build on  data-dependent behavioral representations of linear systems to consider the case that the recent output data are noisy and solve the data driven robust optimal tracking control problem.	
	However, the proposed method assumes that in the data-dependent representation only recent data is noisy. Future work will be devoted to studying the impact of noise in the historical data.
\vspace{-0.5cm}
        \acks{This research is supported by the Swiss National Science Foundation under the NCCR Automation (grant agreement 51NF40\_180545) and the COFLEX project (grant agreement 200021 169906).}
        
	\bibliography{references}
	
\end{document}